\newtheorem{observation}{Observation}
\begin{document}
\title{Optimal $L(1,2)$-edge Labeling of Infinite Octagonal Grid}
\author{Subhasis Koley \and Sasthi C. Ghosh}
\institute{Advanced Computing and Microelectronics unit\\ Indian Statistical Institute, 203 B. T. Road, Kolkata 700108, India\\ Emails: \email{subhasis.koley2@gmail.com, sasthi@isical.ac.in }\\
}
\authorrunning{Sasthi C. Ghosh et al.}
\maketitle              

\begin{abstract} 
For two given non-negative integers $h$ and $k$, an $L(h,k)$-edge labeling of a graph $G=(V(G),E(G))$ is a function $f':E(G) \xrightarrow{}\{0,1,\cdots, n\}$ such that $\forall e_1,e_2 \in E(G)$, $\vert f'(e_1)-f'(e_2) \vert \geq h$ when $d'(e_1,e_2)=1$ and $\vert f'(e_1)-f'(e_2) \vert \geq k$ when $d'(e_1,e_2)=2$ where $d'(e_1,e_2)$ denotes the distance between $e_1$ and $e_2$ in $G$. Here $d'(e_1,e_2)=k'$ if there are at least $(k'-1)$ number of edges in $E(G)$ to connect $e_1$ and $e_2$ in $G$. The objective is to find \textit{span} which is the minimum $n$ over all such $L(h,k)$-edge labeling and is denoted as $\lambda'_{h,k}(G)$. Motivated by the channel assignment problem in wireless cellular network, $L(h,k)$-edge labeling problem has been studied in various infinite regular grids. For infinite regular octagonal grid $T_8$, it was proved  that $25 \leq \lambda'_{1,2}(T_8) \leq 28$ [Tiziana Calamoneri, International Journal of Foundations of Computer Science, Vol. 26, No. 04, 2015] with a gap between lower and upper bounds. In this paper we fill the gap and prove that $\lambda'_{1,2}(T_8)= 28$.

\keywords{$L(h,k)$-edge labeling \and Regular octagonal grid \and Span \and Edge labeling}
\end{abstract}

\section{Introduction}\label{sec:1}
In a wireless network, frequency channels are assigned to the transmitters for communications. As the number of frequency channels are limited, the main challenge of the Channel Assignment Problem ($CAP$) is to assign frequencies to the transmitters effectively such that interference can not occur. Hale~\cite{Hale} formulated $CAP$ as a classical vertex coloring problem. To incorporate the effect of interference in multi hop distance into the formulation, distance vertex labeling of graph was proposed in ~\cite{Roberts},~\cite{GriggsYeh}. A special case of distance vertex labeling of a graph is $L(h,k)$-vertex labeling and is defined as follows.

\begin{definition}
For two given non-negative integers $h$ and $k$, an $L(h,k)$-vertex labeling of a graph $G=(V(G),E(G))$ is a function $f:V(G) \xrightarrow{}\{0,1,\cdots, n\}$ such that $\forall v_1,v_2 \in V(G)$, $\vert f(v_1)-f(v_2) \vert \geq h$ when $d(v_1,v_2)=1$ and $\vert f(v_1)-f(v_2) \vert \geq k$ when $d(v_1,v_2)=2$ where $d(v_1,v_2)$ denotes the distance between $v_1$ and $v_2$. Here $d(v_1,v_2)=k'$ if there are at least $k'$ number of edges in $E(G)$ to connect $v_1$ and $v_2$ in $G$.
\end{definition}

The \textit{span} $\lambda_{h,k}(G)$ of $L(h,k)$-vertex labeling is the minimum $n$ such that $G$ admits an $L(h,k)$-vertex labeling. The $L(h,k)$-vertex labeling problem on different types of graphs have been studied  ~\cite{YEH},~\cite{cala2},~\cite{GriggsJin}. The $L(h,k)$-edge labeling problem was introduced in ~\cite{Georges} and is defined as follows.

\begin{definition}
For two given non-negative integers $h$ and $k$, an $L(h,k)$-edge labeling of a graph $G=(V(G),E(G))$ is a function $f':E(G) \xrightarrow{}\{0,1,\cdots, n\}$ such that $\forall e_1,e_2 \in E(G)$,  $\vert f'(e_1)-f'(e_2) \vert \geq h$ when $d'(e_1,e_2)=1$ and $\vert f'(e_1)-f'(e_2) \vert \geq k$ when $d'(e_1,e_2)=2$ where $d'(e_1,e_2)$ denotes the distance between $e_1$ and $e_2$. Here $d'(e_1,e_2)=k'$ if there are at least $(k'-1)$ number of edges in $E(G)$ to connect $e_1$ and $e_2$ in $G$. 
\end{definition}
The \textit{span} $\lambda'_{h,k}(G)$ of $L(h,k)$-edge labeling is the minimum $n$ such that $G$ admits an $L(h,k)$-edge labeling. Various studies for $L(h,k)$-edge labelling have been carried out for different types of graphs. Among the various graph classes, many studies have been done on infinite regular hexagonal grid, infinite square grid, infinite triangular grid and infinite octagonal grid specially for $h=1,2$ and $k=1,2$ ~\cite{lin1},~\cite{lin2},~\cite{lin3},~\cite{cala}~\cite{Bandopadhyay2021}. In~\cite{cala},  infinite octagonal grid $T_8$ has been studied and the lower and upper bound of $\lambda'_{1,2}(T_8)$ have been given. More specifically, in~\cite{cala}, it is given that $25 \leq \lambda'_{1,2}(T_8) \leq 28$, where there is a gap between lower and upper bounds. In this paper, we fill the gap and prove that $\lambda'_{1,2}(T_8)\geq 28$. As in~\cite{cala}, $\lambda'_{1,2}(T_8)\leq 28$, it is concluded that $\lambda'_{1,2}(T_8)= 28$. Throughout this paper, we use color and label interchangeably.  To prove the lower bound of $\lambda'_{1,2}(T_8)$, we use the structural properties of $T_8$. More specifically, we first identify a subgraph $G'$ of $T_8$ where no two edges can have the same color. After that we consider all  edges where a pair of consecutive colors $(c,c\pm 1)$ can be used in $G'$. Based on this, we identify the subgraphs in $T_8$ where $c$ and $c\pm 1$ can not be used. Then using the structural properties of those subgraphs we conclude how many additional colors other the colors used in $G'$ must be required to color $T_8$ by using the pigeon hole principle and accordingly derive the span.

\section{Results}\label{sec:2}

\begin{figure}
\begin{center}
\centerline{\includegraphics[scale=.85]{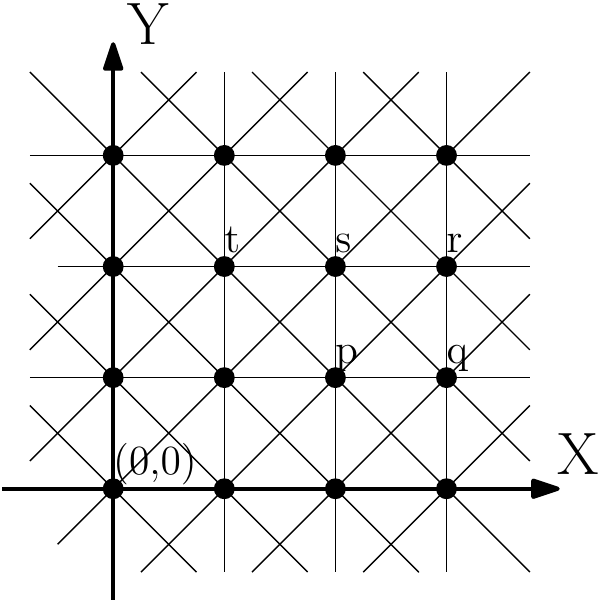}}
\caption{An infinite regular octagonal grid $T_8$.}
\label{oct_grid} 
\end{center} 
\end{figure}
Fig.~\ref{oct_grid} shows a portion of infinite regular octagonal grid $T_8$. Observe that there are four types of edges in $T_8$. The edges which are along or parallel to the $X$ axis and the edges which are along or parallel to the $Y$ axis are said to be \textit{horizontal} edges and \textit{vertical} edges respectively. The edges which are at $45^{\circ}$ to any of a horizontal edge and the edges which are at $135^{\circ}$ to any of a horizontal edge are said to be \textit{right slanting} edges and \textit{left slanting} edges respectively. Here the angular distance between two  adjacent edges $e_1$ and $e_2$ represents the smaller of the two angles measured in anticlockwise from $e_1$ to $e_2$ and from $e_2$ to $e_1$. In Fig.~\ref{oct_grid}, the edges $(p,q)$, $(p,r)$, $(p,s)$ and $(p,t)$ are a horizontal, a right slanting, a vertical and a slanting edges respectively. By \textit{slanting} edges, we mean all the left and right slanting edges and by  \textit{non slanting} edges, we mean all the horizontal and vertical edges.

\begin{figure}

\begin{center}
\centerline{\includegraphics[]{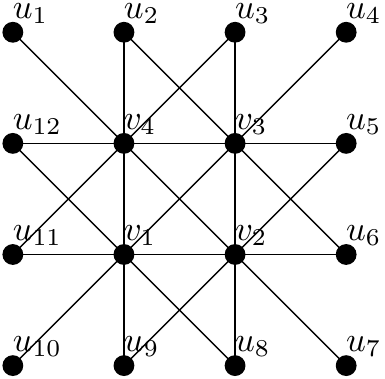}}
\caption{The $G_S$ corresponding to the $K_4$ having vertex set $S=\{v_1,v_2,v_3,v_4\}$.}
\label{oct_box} 
\end{center} 
\end{figure}

Consider any $K_4$ (complete graph of $4$ vertices) in $T_8$ and let $S=V(K_4)$ be the vertex set of the $K_4$. Let $v$ be a vertex in $T_8$ and $N_v$ be the set of vertices in $T_8$ which are adjacent to $v$. Let $N(S)=\displaystyle \bigcup_{v\in S} N_v$ be the set of all vertices in $T_8$ which are adjacent to at least one vertex in $S$. Let us define $G_S$ as the subgraph of $T_8$ such that $V(G_S)=S\cup N(S)$ and $E(G_S)$ is the set of all edges of $T_8$ which are incident to at least one vertex in $S$. Fig.~\ref{oct_box} shows the the $K_4$ with vertex set $S=\{v_1,v_2,v_3,v_4\}$ and the corresponding $G_S$. It is evident that any two edges of $G_S$ are at distance at most $2$. Hence no two edges of $G_S$ can be given the same color for $L(1,2)$-edge labeling of $G_S$. As $\vert E(G_{S}) \vert=26 $ and $d'(e_1,e_2) \leq 2$, $\forall e_1, e_2 \in E(G_{S})$, $\lambda'_{1,2}(G_{S})\geq 25$ for $L(1,2)$ edge labeling of $G_{S}$. 

\begin{figure}
\begin{center}
\centerline{\includegraphics[]{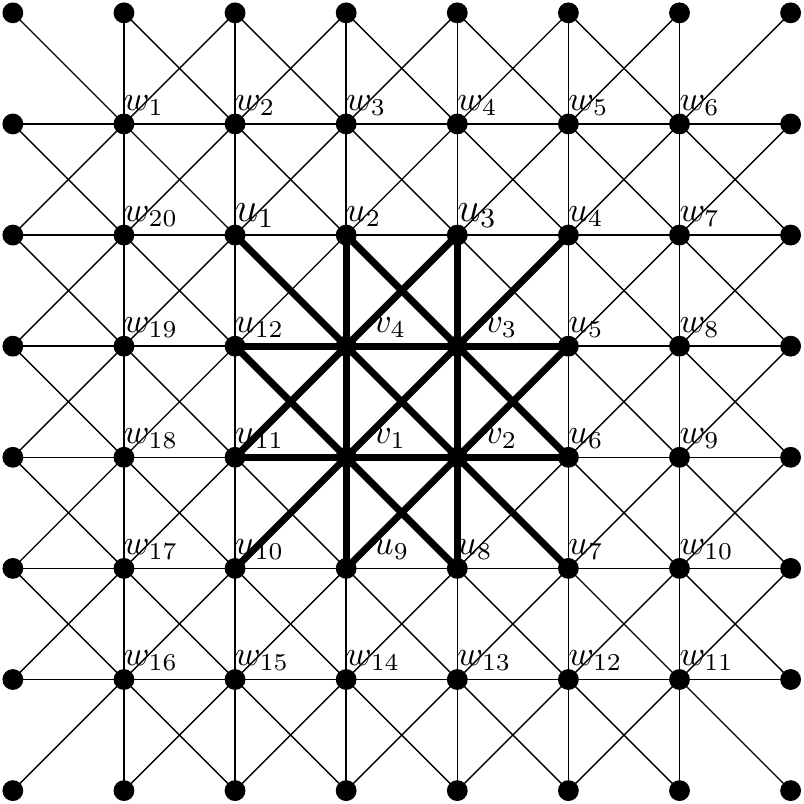}}
\caption{A subgraph $G$ of an infinite octagonal grid $T_8$.}
\label{oct} 
\end{center}
\end{figure}

Consider two edges $e_1, e_2 \in E(G_{S})$ such that $d'(e_1,e_2)=2$. Let the color $c$ be assigned to $e_1$. Clearly, the colors $c \pm  1$ can not be used at $e_2$ for $L(1,2)$-edge labeling. As there exists no pair of edges $e_1$ and $e_2$ such that $d'(e_1,e_2) \geq 3$ in $G_{S}$, both  $c \pm  1$ must be used at the adjacent edges of $e_1$ in $G_{S}$. 

Consider the $K_4$ having vertex set $S=\{v_1,v_2,v_3,v_4\}$ as shown in Fig. \ref{oct}. Now define $S'=S \cup N(S) \cup N(N(S))$. Fig. \ref{oct} shows the graph $G$ such that $V(G)=S' \cup N(S')$ and $E(G)$ is the set of all edges in $T_8$ which are incident to at least one vertex in $S'$. Note that there are total $25$ distinct $K_4$s including the $K_4$ having vertex set $S$ in $G$. Note that here $G$ is built over the $G_S$. Consider a $G_S$ and the corresponding $G$. Let two consecutive colors $c$ and $c+1$ be used in $G_S$. In the following lemma we identify all the $K_4$s having vertex sets $S_1, S_2, \cdots, S_m$ such that  $c$ and $c+1$ can not be used at $G_{S_1}, G_{S_2}, \cdots, G_{S_m}$ when $c$ and $c+1$ are used in $G_S$. 

\begin{lemma}\label{box2}
For every pair of consecutive colors $c$ and $c+1$ used in two adjacent edges $e_1$ and $e_2$ in $G_{S}$, except when $e_1$ and $e_2$ form  an angle of $45^{\circ}$ at their common incident vertex, there exists at least $4$ different $K_4$s having vertex sets $S_1$, $S_2$, $S_3$ and $S_4$ other than $S$ such that 1) $c$ can not be used at $G_{S_1}$ and $G_{S_2}$ and 2) $c+1$ can not be used at $G_{S_3}$ and $G_{S_4}$. When $e_1$ and $e_2$ are at angle $45^{\circ}$, there exists $3$ different $K_4$s having vertex sets $S_1$, $S_2$ and $S_3$ other than $S$ such that either 1) $c$ can not be used at $G_{S_1}$ and $G_{S_2}$ and $c+1$ can not be used at $G_{S_3}$ or 2) $c+1$ can not be used at $G_{S_1}$ and $G_{S_2}$ and $c$ can not be used at $G_{S_3}$.
\end{lemma}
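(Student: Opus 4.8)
The plan is to argue purely locally around the common vertex of $e_1$ and $e_2$, exploiting the fact (established just before the lemma) that if an edge $e$ of some $G_{S'}$ carries color $c$, then $c\pm 1$ cannot be used on any edge at distance $2$ from $e$, and in particular cannot be used on any edge of a $K_4$ whose $G_{S'}$ contains an edge at distance $\le 2$ from $e$. So the whole lemma reduces to a finite case analysis: place the pair $(c,c+1)$ on a pair of adjacent edges $e_1,e_2$ of one fixed $K_4$ (the one with vertex set $S$), and then locate other $K_4$s in $G$ all of whose incident edges are forbidden the value $c$ (respectively $c+1$). An edge $f$ of $G_{S_i}$ is forbidden $c$ as soon as $d'(f,e_1)\le 2$; a $K_4$ with vertex set $S_i$ has $c$ unusable at $G_{S_i}$ exactly when \emph{every} edge of $E(G_{S_i})$ is within distance $2$ of $e_1$, equivalently when every edge incident to $S_i$ lies in the distance-$2$ neighborhood of $e_1$. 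First I would set up coordinates on $T_8$ and enumerate, once and for all, the edges at edge-distance $\le 2$ from a fixed edge, separately for each of the four edge-types (horizontal, vertical, left-slanting, right-slanting), since the local picture differs by type.

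Next I would split into cases according to the angle between $e_1$ and $e_2$ at their common vertex $w$. In an octagonal grid the possible angles between two edges meeting at a vertex are $45^\circ$, $90^\circ$, $135^\circ$, and (for the two collinear horizontal/vertical directions through a degree-4 crossing vertex) $180^\circ$; by the symmetry group of $T_8$ one only needs a few representatives of each. For a representative non-$45^\circ$ pair $(e_1,e_2)$ I would draw the ball of radius $2$ around $e_1$ and around $e_2$ and then scan the $25$ copies of $K_4$ sitting inside $G$, checking which ones are entirely swallowed by $B_2(e_1)$ (giving the $K_4$s where $c$ is banned) and which are entirely swallowed by $B_2(e_2)$ (giving those where $c+1$ is banned); the claim is that in every non-$45^\circ$ case there are at least two of each kind distinct from $S$. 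For the $45^\circ$ case the geometry is tighter — $e_1$ and $e_2$ then both lie on the short slanting side of an octagon — so the $B_2$-balls overlap more heavily and one of the two colors gets only one extra forbidden $K_4$ while the other gets two, which is exactly the asymmetric conclusion stated; here I would also use the left/right-slanting symmetry to see that whether it is $c$ or $c+1$ that gets the lone $K_4$ depends only on the orientation of the pair.

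The main obstacle is bookkeeping rather than conceptual depth: one must be careful that a $K_4$ is counted as ``$c$ forbidden on $G_{S_i}$'' only when \emph{all} $26$ edges of $G_{S_i}$ are at distance $\le 2$ from $e_1$ — a single edge of $G_{S_i}$ at distance $3$ already lets $c$ reappear — so the candidate $K_4$s must be chosen close enough to $e_1$, and showing there are still $\ge 2$ of them (and simultaneously $\ge 2$ for $e_2$, all four distinct and $\ne S$) is where the argument could fail if the grid were sparser. I would organize this with a single annotated figure per angle-case (reusing Fig.~\ref{oct}), labeling the relevant $K_4$s as $S_1,\dots,S_4$, and verify distinctness by inspection of their vertex sets. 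The collinear ($180^\circ$) and the $90^\circ$/$135^\circ$ subcases reduce to one another up to symmetry, so after fixing representatives the verification is short; the $45^\circ$ case is the only one needing the separate, weaker statement, and I would prove it last, emphasizing that the deficiency of one forbidden $K_4$ there is genuine and not an artifact of a bad choice of witnesses.
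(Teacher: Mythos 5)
Your plan would prove the lemma, but it is not the paper's route, and one of your characterizations is off. You forbid $c$ on $G_{S_i}$ using only the ball $B_2(e_1)$: every edge at distance $1$ or $2$ from $e_1$ is barred from the value $c$, so any $K_4$ whose four vertices all lie in $\{v_1,v_2\}\cup N_{v_1}\cup N_{v_2}$ has all $26$ edges of $G_{S_i}$ within distance $2$ of $e_1$ and hence cannot see $c$ anywhere. Counting unit squares inside that vertex set (a $4\times 3$ block when $e_1$ is non-slanting, a union of two overlapping $3\times 3$ blocks when $e_1$ is slanting or a diagonal) gives five or six such $K_4$s besides $S$ in every angular case, and likewise for $e_2$, so four distinct witnesses always exist; this is legitimate and arguably cleaner. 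The paper instead certifies \emph{farther} $K_4$s (ones straddling $N(S)$ and $N(N(S))$, e.g.\ $S_1=\{u_5,u_6,w_8,w_9\}$) by combining two constraints: edges within distance $2$ of $e_1$ cannot be $c$ because $f'(e_1)=c$, and edges at distance exactly $2$ from $e_2$ cannot be $c$ because $f'(e_2)=c+1$ and $k=2$. Your claim that $c$ is unusable at $G_{S_i}$ ``exactly when'' every edge of $G_{S_i}$ is within distance $2$ of $e_1$ is therefore false as a biconditional --- it omits precisely the second mechanism the paper relies on --- though as a sufficient condition it is all your argument needs. Two caveats: first, your own criterion in fact yields at least two forbidden $K_4$s for \emph{each} of $c$ and $c+1$ even in the $45^{\circ}$ case (both relevant vertex blocks contain at least five non-$S$ unit squares), so your story that one color ``genuinely'' gets only one witness there does not match what your test produces; the lemma's weaker $2{+}1$ conclusion follows a fortiori, so this is harmless but suggests the computation was not actually carried out. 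Second, the proposal is a plan rather than a finished proof: the per-angle enumeration of witnesses and the distinctness checks remain to be written, but nothing in the outline would fail.
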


\begin{proof}
Note that the angular distance between any two adjacent edges in $T_8$ can be any one of $45^{\circ}$, $90^{\circ}$, $135^{\circ}$, $180^{\circ}$, $225^{\circ}$, $270^{\circ}$ and $315^{\circ}$. From symmetry, it is suffice to consider the cases where the angular distance between two adjacent edges in $T_8$ is $180^{\circ}$ or $135^{\circ}$ or $90^{\circ}$ or $45^{\circ}$.

\begin{itemize}
    \item When angular distance between two adjacent edges is $180^{\circ}$: Observe that two adjacent horizontal edges or two adjacent vertical edges or two adjacent left slanting edges or two adjacent right slanting edges can be at $180^{\circ}$. Clearly the cases where two horizontal edges and  two vertical edges forming $180^{\circ}$ are symmetric. Similarly the cases where two left slanting edges and  two right slanting edges forming $180^{\circ}$ are also symmetric. So we need to consider the following two cases.\\

    Case $1$) When two adjacent horizontal edges $e_1=(v_1,v_2)$ and $e_2=(v_2, u_6)$ form $180^{\circ}$ (Fig. \ref{oct}). Let $f'(e_1)=c$ and $f'(e_2)=c+1$. Note that any edge $e$ incident to any of the vertices in $\{u_5,u_6,u_7\}$ is at distance at most two from $e_1$. As $f'(e_1)=c$, the color $c$ can not be used at any of those edges for $L(1,2)$ edge labeling. Similarly observe that any edge $e$ incident to any of the vertices in $\{w_8, w_9,w_{10}\}$ but not incident to any of the vertices in $\{u_5,u_6,u_7\}$ is at distance two from $e_2$. As $f'(e_2)=c+1$, the color $c$ can not be used at any of those edges for $L(1,2)$ edge labeling. Hence there exists $2$ different $K_4$s having vertex sets $S_1=\{ u_5,u_6,w_8,w_9\}$ and $S_2=\{u_6,u_7,w_9,w_{10} \}$ such that $c$ can not be used at $G_{S_1}$ and $G_{S_2}$. With similar argument, it can be shown that there exists $2$ different $K_4$s having vertex sets $S_3=\{v_1,v_4,u_{11},u_{12} \}$ and $S_4=\{ v_1,u_9, u_{10},u_{11} \}$ such that $c+1$ can not be used at $G_{S_3}$ and $G_{S_4}$.\\
    
    Case $2$) When two adjacent right slanting edges $e_1=(v_1,v_3)$ and $e_2=(v_1,u_{10})$ form $180^{\circ}$ (Fig. \ref{oct}). Let $f'(e_1)=c$ and $f'(e_2)=c+1$. In this case, 
there exists $3$ different $K_4$s having vertex sets $S_1=\{u_{10},w_{15}, w_{16}, w_{17}\}$,  $S_2=\{u_9, u_{10},w_{14},w_{15}\}$ and $S_3=\{u_{10},u_{11},w_{17},w_{18}\}$ such that $c$ can not be used at $G_{S_1}$,  $G_{S_2}$ and  $G_{S_3}$. Similarly, there exists $3$ different $K_4$s having vertex sets $S_3=\{v_3,v_4,u_2,u_3 \}$,  $S_4=\{ v_3,u_3,u_4, ,u_5\}$ and $S_5=\{ v_2,v_3,u_5, u_6\}$  such that $c+1$ can not be used at $G_{S_3}$, $G_{S_4}$ and $G_{S_5}$.\\

\item When angular distance between two adjacent edges is $135^{\circ}$: Observe that a horizontal edge and its adjacent left slanting edge or a horizontal edge and its adjacent right slanting edge or a vertical edge and its adjacent left slanting edge or a vertical edge and its adjacent right slanting edge may be at $135^{\circ}$. Clearly all the cases are symmetric. So we need to consider the following case only.  Let us consider the horizontal edge $e_1=(v_1,v_2)$ and the right slanting edge $e_2=(v_2,u_5)$. Let  $f'(e_1)=c$ and $f'(e_2)=c+1$. From similar discussion stated in the previous case, there exists $3$ different $K_4$s having vertex sets $S_1=\{v_3,u_3 ,u_4, u_5\}$,  $S_2=\{u_4,u_5,w_7,w_8 \}$ and $S_3=\{u_5,u_6, w_8,w_9 \}$ such that $c$ can not be used at $G_{S_1}$, $G_{S_2}$ and $G_{S_3}$. Similarly, there exists $2$ different $K_4$s having vertex sets $S_4=\{v_1,v_4,u_{11},u_{12} \}$ and $S_5=\{ v_1,u_9, u_{10},u_{11}\}$ such that $c+1$ can not be used at $G_{S_4}$ and $G_{S_5}$.\\

\item When angular distance between two adjacent edges is $90^{\circ}$: Observe that a horizontal edge and its adjacent vertical edge or a vertical edge and its adjacent horizontal edge or a right slanting edge and its adjacent left slanting edge  may be at $90^{\circ}$. Clearly the first two cases are symmetric. So we need to consider the following two cases.\\

Case $1$) Consider the horizontal edge $e_1=(v_1,v_2)$ and the vertical edge $e_2=(v_2,v_3)$. Let $f'(e_1)=c$ and $f'(e_2)=c+1$. As similar argument stated in the previous cases, 
there exists $2$ different $K_4$s having vertex sets $S_1=\{v_3,v_4,u_2,u_3 \}$ and $S_2=\{v_3,u_3,u_4,u_5\}$ such that $c$ can not be used at $G_{S_1}$ and $G_{S_2}$. Similarly, there exists $2$ different $K_4$s having vertex sets $S_3=\{v_1,v_4,u_{11},u_{12}\}$ and $S_4=\{ v_1,u_9,u_{10},u_{11} \}$ such that $c+1$ can not be used at $G_{S_3}$ and $G_{S_4}$.\\

Case $2$) Consider the right slanting edge $e_1=(v_1,v_3)$ and the left slanting edge $e_2=(v_1,u_{12})$. Let $f'(e_1)=c$ and $f'(e_2)=c+1$. As similar argument stated in the previous cases, 
there exists $3$ different $K_4$s having vertex sets $S_1=\{u_1,u_{12},w_{19},w_{20}\}$,  $S_2=\{u_1,u_2,u_{12},v_4\}$ and $S_3=\{u_{11},u_{12},w_{18},w_{19}\}$ such that $c$ can not be used at $G_{S_1}$, $G_{S_2}$ and $G_{S_3}$. Similarly, there exists $3$ different $K_4$s having vertex sets $S_4=\{v_3,v_4,u_2,u_3 \}$, $S_5=\{ v_3,u_3,u_4,u_5 \}$ and $S_6=\{ v_2,v_3,u_5, u_6\}$ such that $c+1$ can not be used at $G_{S_4}$,  $G_{S_5}$ and $G_{S_6}$.\\

\item When angular distance between two edges is $45^{\circ}$:  
Observe that a horizontal edge and its adjacent left slanting edge or a horizontal edge and its adjacent right slanting edge or a vertical edge and its adjacent left slanting edge or a vertical edge and its adjacent right slanting edge may be at $45^{\circ}$. Clearly all the cases are symmetric. So we need to consider the following case only. Consider the horizontal edge $e_1=(v_1,v_2)$ and the left  slanting edge $e_2=(v_2,v_4)$. Let  $f'(e_1)=c$ and $f'(e_2)=c+1$. As similar argument stated in the previous cases, there exists $2$ different $K_4$s having vertex sets $S_1=\{v_4,u_1, u_2,u_{12} \}$ and $S_2=\{v_3,v_4,u_2,u_3 \}$  such that $c$ can not be used at $G_{S_1}$ and $G_{S_2}$. Similarly, there exists a $K_4$s having vertex set $S_3=\{ v_1,u_9,u_{10}, u_{11}\}$ such that $c+1$ can not be used at $G_{S_3}$.  If $f'(e_1)=c+1$ and $f'(e_2)=c$, then there exists $2$ different $K_4$s having vertex sets $S_1=\{v_4,u_1, u_2,u_{12} \}$ and $S_2=\{v_3,v_4,u_2,u_3 \}$  such that $c+1$ can not be used at $G_{S_1}$ and $G_{S_2}$. Similarly, there exists a $K_4$s having vertex set $S_3=\{ v_1,u_9,u_{10},u_{11}\}$ such that $c$ can not be used at $G_{S_3}$.  

\end{itemize}
\end{proof} \qed

Now we state and prove the following Theorems.
\begin{theorem}
$\lambda'_{1,2}(T_8)\geq 26$.
\end{theorem}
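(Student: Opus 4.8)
The plan is to argue by contradiction. Suppose $T_8$ admits an $L(1,2)$-edge labeling $f'$ using only the $26$ colors $\{0,1,\dots,25\}$, i.e. $\lambda'_{1,2}(T_8)=25$; I will contradict Lemma~\ref{box2}.

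The first step is to upgrade the observation made just before Lemma~\ref{box2} into a \emph{rainbow} property valid for \emph{every} $K_4$ of $T_8$: if $S'$ is the vertex set of any $K_4$ in $T_8$, then $f'$ restricted to $E(G_{S'})$ is a bijection onto $\{0,1,\dots,25\}$. Indeed $G_{S'}$ is built from a $2\times 2$ block of vertices exactly as $G_S$ is, so $G_{S'}\cong G_S$ has $26$ edges; moreover any two of its edges are at distance at most $2$ in $T_8$, since each edge of $G_{S'}$ is incident to a vertex of $S'$ and any two vertices of a $K_4$ are adjacent, giving a connecting path of length at most $2$. Hence the $26$ edges of $G_{S'}$ receive $26$ pairwise distinct colors, which must be all of $\{0,\dots,25\}$; in particular every color of $\{0,\dots,25\}$ is used on some edge of $G_{S'}$.

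The second step applies this to the distinguished $K_4$ with vertex set $S$ and to the consecutive colors $0$ and $1$. By the rainbow property both colors occur in $G_S$, say on the distinct edges $e_1$ (colored $0$) and $e_2$ (colored $1$). Since $|0-1|=1<2=k$, we cannot have $d'(e_1,e_2)=2$, and since $e_1,e_2\in E(G_S)$ we have $d'(e_1,e_2)\le 2$; therefore $e_1$ and $e_2$ are adjacent. Now Lemma~\ref{box2} applies with $c=0$: regardless of the angle formed by $e_1$ and $e_2$ (the $45^\circ$ case is treated separately there, but every case yields at least three $K_4$s), it produces a $K_4$ with some vertex set $S'\neq S$, lying in $G$ and hence in $T_8$, such that one of the colors $0,1$ cannot be used on any edge of $G_{S'}$. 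This contradicts the rainbow property of $G_{S'}$, so $\lambda'_{1,2}(T_8)\ge 26$.

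I do not anticipate a real obstacle here; the only point needing care is the rainbow property for an \emph{arbitrary} $K_4$ rather than just for $S$, which is immediate from $G_{S'}\cong G_S$. The genuinely hard work lies in the later jump from $26$ to $28$: that will require tracking how every pair of consecutive colors used inside a single $G_S$ simultaneously forbids colors across the $25$ distinct $K_4$s contained in $G$, and then combining these constraints via the pigeonhole principle as sketched in the introduction.
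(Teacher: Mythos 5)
Your proposal is correct and follows essentially the same route as the paper: both use the fact that the $26$ mutually close edges of any $G_{S'}$ must receive all $26$ colors, deduce that a consecutive pair must sit on adjacent edges, and then invoke Lemma~\ref{box2} to exhibit a $K_4$ whose $G_{S_1}$ is denied one of those colors, forcing a $27$th color. The only cosmetic difference is that you phrase it as a contradiction with a fixed pair $(0,1)$, while the paper argues directly with an arbitrary consecutive pair.
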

\begin{proof}
Let us consider  the $K_4$ with vertex set $S=\{v_1,v_2,v_3,v_4\}$ and the corresponding $G_{S}$ as shown in Fig. \ref{oct_box}. Observe that there are $26$ edges in $G_{S}$. Note that there are no two edges at more than distance two apart in $G_{S}$. Hence all the colors used in $G_{S}$ must be distinct for $L(1,2)$-edge labeling. Hence $26$ consecutive colors $\{0,1, \cdots, 25\}$ are to be used in $G_{S}$, otherwise $\lambda'_{1,2}(G_{S})\geq 26$.  In that case, any two consecutive colors $c$ and $c+1$ must be  used at two adjacent edges in $G_{S}$. Therefore, from Lemma~\ref{box2}, there exists at least a  $K_4$ having vertex set $S_1$ such that $c$ can not be used at $G_{S_1}$. But in $G_{S_1}$, $26$ distinct colors must be used. So if we do not use $c$ in $G_{S_1}$, at least a new color which is not used in $G_{S}$ must be used in $G_{S_1}$. So at least the color $26$ must be introduced in $G_{S_1}$. Hence $\lambda'_{1,2}(G_{S_1})\geq 26$ implying $\lambda'_{1,2}(T_8)\geq 26$. 
\end{proof}\qed

\begin{theorem}\label{th2}
$\lambda'_{1,2}(T_8)\geq 27$.
\end{theorem}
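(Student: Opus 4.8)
The plan is to argue by contradiction: assume $\lambda'_{1,2}(T_8)\le 26$, so that $T_8$ admits an $L(1,2)$-edge labeling $f'$ using only the $27$ colors of $P=\{0,1,\dots,26\}$, and then derive a contradiction from the structure already set up. First I would note that for \emph{any} $K_4$ with vertex set $T$ in $G$, the subgraph $G_T$ has $26$ edges that are pairwise at distance at most $2$, so $f'$ must assign them $26$ distinct colors; consequently $G_T$ can omit at most one color of $P$. Applying this to the central $K_4$ with vertex set $S$, the color set $C=f'(E(G_S))$ has size $26$, so exactly one color $m\in P$ is missing from $G_S$.

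Next I would exploit the consecutive pairs inside $G_S$. Any two edges of $G_S$ carrying colors $c$ and $c+1$ must be at distance exactly $1$ (distance $2$ would force a difference of at least $2$, and no farther pairs exist in $G_S$), hence they are adjacent and meet at one of the angles handled in Lemma~\ref{box2}, so Lemma~\ref{box2} applies to them. The number of consecutive pairs $(c,c+1)$ with $c,c+1\in C$ is at least $24$, since deleting the single color $m$ from $\{0,\dots,26\}$ destroys at most two of its $26$ consecutive pairs (only one when $m\in\{0,26\}$). For each such pair, Lemma~\ref{box2} produces a set $\mathcal{K}$ of at least $3$ distinct $K_4$s of $G$, all different from $S$, together with, for each member $T\in\mathcal{K}$, a specified color in $\{c,c+1\}$ that cannot be used anywhere in $G_T$.

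The core is then a counting step. Summing over the (at least) $24$ consecutive pairs yields at least $3\cdot 24=72$ incidences of the form (consecutive pair, forbidden $K_4$); but $G$ contains only $24$ $K_4$s besides $S$, so by the pigeonhole principle some $K_4$ with vertex set $T$ is produced by at least three distinct consecutive pairs. Each of these pairs forbids at $G_T$ a color drawn from its own $\{c,c+1\}$, and any fixed color $d$ can be forbidden at $G_T$ only through the pairs $(d-1,d)$ or $(d,d+1)$, i.e.\ through at most two of our pairs. Hence three distinct pairs cannot all forbid the same color at $G_T$, so $G_T$ is forced to omit at least two distinct colors of $P$, leaving at most $25$ colors for its $26$ edges that require distinct colors --- a contradiction. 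Therefore $\lambda'_{1,2}(T_8)\ge 27$.

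The step I expect to be most delicate is not the pigeonhole itself but the bookkeeping that justifies its parameters: confirming that every $K_4$ named in Lemma~\ref{box2} indeed lies among the $24$ non-central $K_4$s of $G$, so that $24$ is the correct box count, and that the ``at least $3$ distinct $K_4$s per consecutive pair'' guarantee survives the symmetry reductions for every orientation the edge pair $e_1,e_2$ can take inside $G_S$. One should also treat the easy sub-case $m\in\{0,26\}$ separately (only one consecutive pair lost, so the count is even more comfortable), and verify once more the elementary fact that consecutive colors in $G_S$ necessarily occur on adjacent edges, since the entire reduction to Lemma~\ref{box2} rests on it.
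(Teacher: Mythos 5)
Your proposal is correct, and it rests on exactly the same machinery as the paper's proof: Lemma~\ref{box2} applied to adjacent edges of $G_S$ carrying consecutive colors, followed by a pigeonhole argument over the $24$ non-central $K_4$s of $G$. Where you differ is in the bookkeeping. The paper partitions the $26$ colors used in $G_S$ into $13$ (or $12$ plus one extra) \emph{disjoint} consecutive pairs, takes only $2$ forbidden-$K_4$s per pair from Lemma~\ref{box2} ($26$ or $25$ pigeons into $24$ holes), and must split into cases according to whether the missing color is $0$, $26$, or an interior color of either parity; disjointness is what guarantees that a collision yields two \emph{distinct} forbidden colors at one $K_4$. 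You instead take all (at least $24$, possibly overlapping) consecutive pairs present in $f'(E(G_S))$, use the full strength of Lemma~\ref{box2} ($3$ forbidden-$K_4$s per pair, so $72$ incidences into $24$ holes), and repair the loss of disjointness with the observation that a fixed color belongs to at most two consecutive pairs, so three incidences at one $K_4$ from three distinct pairs must forbid two distinct colors there. This buys you a uniform treatment with no case analysis on the position or parity of the missing color, at the cost of that one extra (easy) observation. Both arguments lean on the same facts you rightly flag as the delicate points: that every $K_4$ named in Lemma~\ref{box2} is among the $24$ counted in $G$, and that consecutive colors in $G_S$ necessarily sit on adjacent edges.
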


\begin{proof}
Consider  $S=\{v_1,v_2,v_3,v_4\}$ and the corresponding $G_{S}$ as shown in Fig. \ref{oct}. Note that $\lambda'_{1,2}(G_S) \leq 26$ only if 1) $26$ consecutive colors $\{0, 1, \cdots, 25\}$ or $\{1, 2, \cdots, 26\}$ are used in $G_{S}$, or 2) the colors $\{0,1,\cdots,26 \}\setminus \{c'\}$ are used in $G_{S}$, where $ 1 \leq c' \leq 25$. The cases of $\{0, 1, \cdots, 25\}$ and $\{1, 2, \cdots, 26\}$ are clearly symmetric. Hence we need to consider only the following  two cases.

Consider the first case. Observe that the set of colors $\{0, 1, \cdots, 25\}$ can be partitioned into $13$ disjoint consecutive pairs of colors $(0,1)$, $(2,3)$, $\cdots $, $(24,25)$. Consider a pair of consecutive colors $(c,c+1)$, where $0 \leq c \leq 24$. From  Lemma~\ref{box2}, it follows that for the pair $(c,c+1)$, there exists two distinct $S_1$ and $S_2$ other that $S$ such that either $c$ or $c+1$ can not be used in $G_{S_1}$ and $G_{S_2}$. So for the $13$ disjoint pairs mentioned above,  there must be $26$ such $S_1, S_2, \cdots, S_{26}$ other than $S$. Note that there are total $25$ $S_j$s including $S$ in $G$. Therefore, there exists only $24$ such $S_j$s other than $S$ in $G$. Hence from pigeon hole ($26$ pigeons and $24$ holes) principle there must be at least one $G_{S_j}$ where two different colors which are used in $G_S$ can not be used there and hence $\lambda'_{1,2}(G_{S_j})\geq 27$ implying $\lambda'_{1,2}(T_8)\geq 27$.

Now we consider the second case. Consider that the colors $\{0,1,\cdots,26 \}\setminus \{c'\}$ have been used in $G_{S}$, where $ 1 \leq c' \leq 25$. First let us consider the case when $c'$ is even. In that case the set of colors $\{0, \cdots, c'-1, c'+1, \cdots 26\}$ can be partitioned into $13$ disjoint consecutive pairs of colors $(0,1), \cdots, (c'-2,c'-1), (c'+1,c'+2), \cdots, (25,26)$. Hence proceeding similarly as above case, from pigeon hole principle, we get that $\lambda'_{1,2}(T_8)\geq 27$. Now consider the case when $c'$ is odd. Let us first consider $c' \neq 1,25$. Note that the set of colors $\{0, \cdots, c'-1, c'+1, \cdots 26\}$ can be partitioned into $12$ disjoint consecutive pairs of colors $(0,1),\cdots, (c'-3,c'-2),(c'+1,c'+2),\cdots, (24,25)$. So there must be $24$ $K_4$s having vertex sets $S_1, S_2, \cdots, S_{24}$ other than $S$ in $G$. Now consider the pair of consecutive colors $(25,26)$. Now, from Lemma \ref{box2}, there must exists at least one $S_{25}$ other than $S$ such that color $26$ can not be used in $G_{S_{25}}$. So we need $25$ such $S_1, S_2, \cdots, S_{25}$ other than $S$. But there exists only $24$ such distinct $S_j$s other than $S$ in $G$. Hence from pigeon hole ($25$ pigeons and $24$ holes) principle there must be at least one $G_{S_j}$ where two different colors used in $G_S$ can not be used there and hence $\lambda'_{1,2}(G_{S_j})\geq 27$ implying $\lambda'_{1,2}(T_8)\geq 27$.
 When $c'=1$ the set of colors $\{0, 2, \cdots 26\}$  can be partitioned into $12$ disjoint consecutive pairs of colors  $(2,3),\cdots, (24,25)$. Considering these $12$ pairs and the pair of consecutive colors $(25,26)$, we get $\lambda'_{1,2}(T_8)\geq 27$ by proceeding similarly as above.
When $c'=25$ the set of colors $\{0,  \cdots 24,26\}$  can be partitioned into $12$ disjoint consecutive pairs of colors  $(1,2,), \cdots, (23,24)$. Considering these $12$ pairs and the pair of consecutive colors $(0,1)$, we get $\lambda'_{1,2}(T_8)\geq 27$ by proceeding similarly as above.
\end{proof}\qed

 Observe that a $K_3$ contains one horizontal, one vertical and one slanting edge. Note that three consecutive colors $c-1,c,c+1$ can be used at three edges with or without forming a $K_3$ (complete graph of $3$ vertices) in $G_S$, where $c$ being used at a slanting or non slanting edge. Accordingly, we now have the following two Lemmas.

\begin{figure}
\begin{center}
\centerline{\includegraphics[scale=.65]{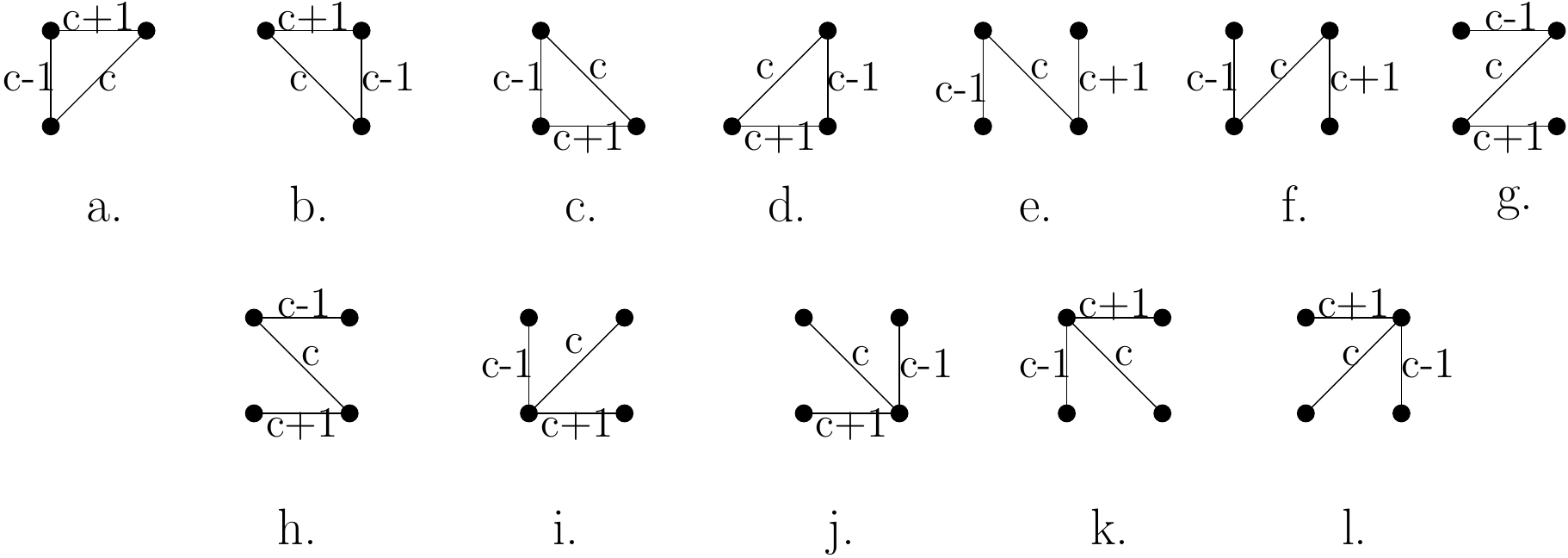}}
\caption{The all possible cases where $c$ is used in a slanting edge $e$ and both $c \pm 1$ are used at edges forming $45^{\circ}$ with $e$.}
\label{dis45} 
\end{center} 
\end{figure}

\begin{lemma}\label{oneuse}
 When three consecutive colors $c-1$, $c$ and $c+1$ are used at  three edges {\it forming} a $K_3$ in $G_S$ with $c$ being used at the {\it slanting edge}, then there exists a  $K_4$ with vertex set $S_1$ in $G$  such that $c$ can not be used in $G_{S_1}$. When three consecutive colors $c-1$, $c$ and $c+1$ are used at three edges {\it without forming} a $K_3$ in $G_S$ with $c$ being used at the {\it slanting edge}, then there exists at least $2$ different $K_4$s with vertex sets $S_1$ and $S_2$ in $G$ such that $c$ can not be used at  $G_{S_1}$ and $G_{S_2}$. 
\end{lemma}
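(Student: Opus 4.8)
The plan is to mimic the structure of the proof of Lemma~\ref{box2}: reduce to a small number of geometric configurations by symmetry, and in each one locate the $K_4$'s of $G$ whose edge sets all lie at distance at most $2$ from the edge carrying colour $c$. First I would recall the key mechanism already exploited above: if an edge $e$ is coloured $c$, then for any $K_4$ with vertex set $S_1$ such that every edge of $G_{S_1}$ is within distance $2$ of $e$, the colour $c$ is forbidden throughout $G_{S_1}$ (since in $G_{S_1}$ all $26$ edges must get distinct colours, as observed in the $\lambda'_{1,2}(G_S)\ge 25$ discussion); hence such an $S_1$ is exactly what we must produce. So the whole lemma is about counting, for a slanting edge $e$, how many of the $25$ $K_4$'s in $G$ are ``captured'' by $e$ (i.e. have $G_{S_1}$ entirely within distance $2$ of $e$), and then checking that the $K_3$/non-$K_3$ hypothesis on the placement of $c-1,c,c+1$ does not let all of these captured $K_4$'s coincide with $S$ itself or become unavailable.

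Concretely, I would fix $e$ to be, say, a right slanting edge $(v_1,v_3)$ in the notation of Fig.~\ref{oct} (all slanting edges are equivalent under the symmetry group of $T_8$), and I would first enumerate \emph{all} $K_4$'s $S_1\neq S$ in $G$ with $G_{S_1}$ at distance $\le 2$ from $e$; from the Case~2 analysis under the $180^\circ$ heading in the proof of Lemma~\ref{box2} one already sees that a slanting edge captures a whole ``fan'' of such $K_4$'s on each side (there the edge $(v_1,v_3)$ forced $c$ to be unusable on $S_1,S_2,S_3$ on one side). The point of the present lemma is finer: we are told not just that $c$ sits on the slanting edge but also that $c-1$ and $c+1$ sit on its two neighbours, either forming a $K_3$ with $e$ or not. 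I would split into the two hypotheses:

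\emph{(i) $c-1,c,c+1$ form a $K_3$ with $c$ on the slanting edge.} Then the two non-slanting edges carrying $c-1$ and $c+1$ are the horizontal and vertical edges of that triangle, i.e. they are the two other edges of the $K_3$ at $v_1$ or $v_3$. Here I would argue that exactly one of the captured $K_4$'s of $e$ is \emph{not} already ruled out by being equal to $S$ or by coinciding with the local triangle's own $K_4$, giving a single guaranteed $S_1$ with $c\notin G_{S_1}$ — matching the ``at least one $K_4$'' in the statement. The bookkeeping is: list the fan of $K_4$'s within distance $2$ of $e$, subtract $S$, and check which remain genuinely available given that $c-1,c+1$ occupy specified nearby edges (those occupations do not create further restrictions on where $c$ lies, they only pin down the geometry, so the count is just ``total captured minus $S$'' restricted to the relevant side).

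\emph{(ii) $c-1,c,c+1$ are used at three edges not forming a $K_3$, with $c$ on the slanting edge.} Now $c-1$ and $c+1$ need not be adjacent to $e$ at all, so $e$ is freer and we can realize two distinct captured $K_4$'s $S_1,S_2\neq S$ with $c$ forbidden on both; I would exhibit these as two members of the fan on opposite sides (or the two on one side, whichever the figure forces) and verify that neither equals $S$ and that the placement of $c-1,c+1$ somewhere else in $G_S$ is consistent. Figure~\ref{dis45} is presumably the reference for enumerating how $c\pm1$ can sit at $45^\circ$ to $e$, which is the sub-case that needs the most care.

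The main obstacle I anticipate is the case analysis for sub-case (i): showing that precisely \emph{one} (not zero) captured $K_4$ survives requires being careful that the $K_4$'s ruled out for the reason ``$=S$'' and the ones ruled out because the triangle's $c-1$ or $c+1$ already lives there do not exhaust the whole fan. I would handle this by drawing the fan explicitly for the right-slanting $e=(v_1,v_3)$ with the triangle at $v_1$, reading off from Fig.~\ref{oct} the vertex sets of all $K_4$'s whose six-or-so edges lie within distance $2$ of $e$, and then deleting $S$ and the at-most-one triangle-blocked set — what's left is the desired $S_1$. The non-$K_3$ case is easier because the extra slack immediately yields two candidates, so the bulk of the writing (and the only place an off-by-one could hide) is in the $K_3$ case, which I would present with an explicit small table of the relevant vertex sets rather than prose.
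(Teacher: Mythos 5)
Your central mechanism is not the one that makes the lemma work, and as stated it produces nothing. You propose to find $K_4$'s $S_1\neq S$ that are ``captured'' by the slanting edge $e$ coloured $c$, meaning every edge of $G_{S_1}$ lies within distance $2$ of $e$; for such an $S_1$ the colour $c$ would indeed be forbidden on all of $G_{S_1}$. But no $S_1\neq S$ has this property: $G_{S_1}$ always contains edges at distance at least $3$ from $e$ (e.g.\ for $S_1=\{v_2,u_6,u_7,u_8\}$ and $e=(v_1,v_3)$, the edges incident to $u_7$ but not to $v_2$). The paper's proof forbids $c$ on those far edges by a different route: they are at distance \emph{exactly} $2$ from the neighbouring edge coloured $c-1$ (or $c+1$), and the $L(1,2)$ condition at distance $2$ requires a gap of at least $2$, so $c$ cannot sit there. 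In other words, the hypothesis that $c-1$ and $c+1$ occupy specific adjacent edges is precisely what generates the restriction; your sentence ``those occupations do not create further restrictions on where $c$ lies, they only pin down the geometry'' asserts the exact opposite and is where the argument collapses. This is also why the $K_3$ versus non-$K_3$ dichotomy matters: in the $K_3$ case the two edges coloured $c\pm1$ share an endpoint off $e$, so their distance-$2$ shadows overlap and only one new $K_4$ is guaranteed, whereas in the non-$K_3$ case they point in different directions and two (or, when one of $c\pm1$ is not at $45^{\circ}$ to $e$, three or four via Lemma~\ref{box2}) distinct $K_4$'s arise. Your plan has no way to see this distinction because it never uses the positions of $c\pm1$.

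A secondary error: you claim that in the non-$K_3$ case ``$c-1$ and $c+1$ need not be adjacent to $e$ at all.'' Since every two edges of $G_S$ are at distance at most $2$, an edge coloured $c\pm1$ at distance exactly $2$ from $e$ would violate the $L(1,2)$ condition; hence both must be adjacent to $e$, as the paper notes just before Lemma~\ref{box2}. The correct case split is therefore by the \emph{angle} the edges carrying $c\pm1$ make with $e$ ($45^{\circ}$ forming a $K_3$, $45^{\circ}$ not forming a $K_3$, or larger angles handled by Lemma~\ref{box2}), not by whether they are adjacent. Finally, note the lemma only asserts ``there exists a $K_4$'' in the first part, so your worry about proving ``exactly one'' is unnecessary.
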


\begin{proof}
Let us consider $c-1$, $c$ and $c+1$ are used at three edges which are forming a $K_3$ with $c$ being used at the slanting edge. The different cases where such scenarios occur are shown in Fig.~\ref{dis45} a. to Fig.~\ref{dis45} d. Note that all the cases shown in Fig.~\ref{dis45} a. to Fig.~\ref{dis45} d. are symmetric. So we consider any one of the cases. Consider the case where the colors $c-1$, $c$ and $c+1$ are used at three edges $(v_1,v_2)$, $(v_1,v_3)$ and $(v_2,v_3)$ respectively (Fig.~\ref{oct}). Now consider the $K_4$ with vertex set $S_1=\{v_2,u_6,u_7,u_8\}$ and observe that any edge incident to any of the vertices in $S_1 \setminus \{v_2\}$ is  at distance two from $(v_1,v_2)$. As $f'(v_1,v_2)=c-1$, $c$ can not be used at those edges for $L(1,2)$ edge labeling. As any edge incident to $v_2$ is at distance at most $2$ from $(v_1,v_3)$ and $f'(v_1,v_3)=c$, the color $c$ can not be used there as well. So $c$ can not be used at $G_{S_1}$. 

Now we consider the case where $c-1$, $c$ and $c+1$ are used at three edges which are not forming a $K_3$ with $c$ being used at the slanting edge. Note that the colors $c-1$ and $c+1$ can be used at two edges both of which are at $45^{\circ}$ with the edge having color $c$. The different cases where such scenarios arrive are shown in Fig.~\ref{dis45} e. to Fig.~\ref{dis45} l. Note that the cases shown in Fig.~\ref{dis45} e. to Fig.~\ref{dis45} h. are symmetric and the cases shown in Fig.~\ref{dis45} i. to Fig.~\ref{dis45} l. are also symmetric. So here we need to consider only two cases.

\begin{itemize}
    
\item Consider the case where the colors $c-1$, $c$ and $c+1$ are used at three edges $(v_1,v_4)$, $(v_1,v_3)$ and $(v_2,v_3)$ respectively (Fig.~\ref{oct}). In this case, the said three edges form a structure isomorphic to Fig.\ref{dis45} e. to Fig.\ref{dis45} h. Here note that there exists two $K_4$s with vertex sets  $S_1=\{v_4,u_1,u_2,u_{12}\}$ and $S_2=\{v_2,u_6,u_7,u_8\}$ such that $c$ can not be used in $G_{S_1}$ and $G_{S_2}$.

\item Consider the case where the colors $c-1$, $c$ and $c+1$ are used at three edges $(v_1,v_4)$, $(v_1,v_3)$ and $(v_1,v_2)$ respectively (Fig.~\ref{oct}). In this case the said three edges form a structure isomorphic to Fig. \ref{dis45} i. to Fig. \ref{dis45} l. Here also, there exists two $K_4$s with vertex sets $S_1=\{v_4,u_1,u_2,u_{12}\}$ and $S_2=\{v_2,u_6,u_7,u_8\}$ such that  $c$ can not in  $G_{S_1}$ and $G_{S_2}$.
\end{itemize}

Now we consider the case where the edge having color $c+1$ (or $c-1$) is not forming $45^{\circ}$ with the slanting edge having color $c$ and the other edge having color $c-1$ (or $c+1$) forming  $45^{\circ}$ with the slanting edge. In that case, from Lemma~\ref{box2}, it follows that there exists at least $2$ distinct $K_4$s having vertex sets $S_1$ and $S_2$ other than $S$ such that $c$ can not be used in $G_{S_1}$ and $G_{S_2}$. As the other color $c-1$ (or $c+1$) is used at an edge forming $45^{\circ}$ with the slanting edge, there also exists at least another  $K_4$ having vertex set $S_3$ other than $S$ such that $c$ can not be used in $G_{S_3}$. So in this case there are at least $3$ distinct $K_4$s. 

We now consider the case where both the colors $c-1$ and $c+1$ both are used at edges not forming $45^{\circ}$ with the slanting edge with color $c$. In this case, from Lemma~\ref{box2}, there exists at least $4$ different $K_4$s having vertex sets $S_1$, $S_2$, $S_3$ and $S_4$ other than $S$ such that $c$ can not be used in $G_{S_1}$, $G_{S_2}$, $G_{S_3}$ and $G_{S_4}$. So in this case there are at least $4$ distinct $K_4$s. 

\end{proof}\qed

\begin{lemma}\label{twouse}

If three consecutive colors $c-1$, $c$ and $c+1$ are used at  three edges of $G_S$ {\it forming} a $K_3$ with $c$ being used at a {\it non slanting edge}, then there exists two  $K_4$s with vertex sets $S_1$ and $S_2$ in $G$  such that $c$ can not be used in $G_{S_1}$ and $G_{S_2}$.  If three consecutive colors $c-1$, $c$ and $c+1$ are used at  three edges of $G_S$ {\it without forming} a $K_3$ with $c$ being used at a {\it non slanting edge}, then there exists at least $3$ $K_4$s with vertex sets $S_1$,  $S_2$ and $S_3$ in $G$  such that $c$ can not be used in $G_{S_1}$, $G_{S_2}$ and $G_{S_2}$. 
\end{lemma}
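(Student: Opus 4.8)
The plan is to run an argument parallel to the proof of Lemma~\ref{oneuse}, now exploiting that $c$ sits on a non-slanting edge. Since $|c-(c\pm1)|=1<2$, in any $L(1,2)$-edge labeling the edge $e_c$ colored $c$ lies at distance exactly $1$ from the edge $e_{c-1}$ colored $c-1$ and from the edge $e_{c+1}$ colored $c+1$; hence both $e_{c-1}$ and $e_{c+1}$ are adjacent to $e_c$. Consequently $\{e_{c-1},e_c,e_{c+1}\}$ does \emph{not} form a $K_3$ exactly when either (i) $e_{c-1}$ and $e_{c+1}$ are incident to the same endpoint of $e_c$ (a ``star''), or (ii) $e_{c-1}$ and $e_{c+1}$ are incident to the two different endpoints of $e_c$ and do not share a vertex (a ``path''). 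Using the symmetries of $T_8$ that fix the horizontal edge $e_c$ setwise, together with the color-reversal symmetry $x\mapsto n-x$, we may assume $e_c$ is the horizontal edge $(v_1,v_2)$ of Fig.~\ref{oct}, and in case (ii) that it is the $v_1$ end that carries $e_{c-1}$.

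The one auxiliary fact I would isolate first is the following: \emph{if $e'$ is any edge adjacent to the non-slanting edge $e_c$ with $|f'(e')-c|=1$, then applying Lemma~\ref{box2} to the pair $\{e_c,e'\}$ produces two distinct $K_4$s, different from $S$ and contained in $G$, at which $c$ cannot be used.} This is read off from the proof of Lemma~\ref{box2}: for each possible angle ($180^\circ$, $135^\circ$, $90^\circ$, $45^\circ$) the color of the non-slanting member of the pair is forbidden at at least two of the $K_4$s listed there, and this holds even at $45^\circ$ — the ``$45^\circ$ exception'' yields only one forbidden $K_4$ for the \emph{slanting} edge's color but still two for the non-slanting edge's color (one uses $x\mapsto n-x$ to reduce to the sub-case treated explicitly in Lemma~\ref{box2}, in which the non-slanting edge carries the smaller color). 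This is exactly where the non-slanting case is stronger than the slanting case of Lemma~\ref{oneuse}.

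Granting this, the $K_3$ case is immediate: in a $K_3$ of $T_8$ the non-slanting edge $e_c$ is adjacent to the other non-slanting edge of the triangle, meeting it at $90^\circ$, so the auxiliary fact already gives two $K_4$s where $c$ is forbidden. For the non-$K_3$ case I would apply the auxiliary fact to each of the pairs $\{e_c,e_{c-1}\}$ and $\{e_c,e_{c+1}\}$, obtaining sets $A$ and $B$ of forbidden-$c$ $K_4$s with $|A|\ge2$ and $|B|\ge2$, and then show $|A\cup B|\ge3$. The localization in Lemma~\ref{box2} makes this routine: the two $K_4$s of $A$ lie at and beyond the far endpoint of $e_{c-1}$ (the endpoint not shared with $e_c$), and those of $B$ at and beyond the far endpoint of $e_{c+1}$. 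In the path configuration these two endpoints are distinct and the two adjacent edges point away from $e_c$ in different directions, so $A\cap B=\emptyset$ and $|A\cup B|\ge4$; in the star configuration one checks, case by case over the finitely many placements, by reading vertex labels from Fig.~\ref{oct} as in Lemma~\ref{box2}, that $A$ and $B$ are never the same two-element set, so $|A\cup B|\ge3$. Each listed $K_4$ is certified to lie in $G$ and to exclude $c$ using only the two elementary facts that an edge within distance $2$ of an edge colored $c$, and an edge at distance $2$ from an edge colored $c\pm1$, cannot receive color $c$.

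The step I expect to be the real obstacle is this last one: in the star configuration there are several non-isomorphic placements of $\{e_{c-1},e_c,e_{c+1}\}$ once $e_c$ is pinned down, and for each one must rule out that the two $K_4$s coming from the $\{e_c,e_{c-1}\}$-pairing coincide with the two coming from the $\{e_c,e_{c+1}\}$-pairing. The only way this could fail is through a $K_4$ that is adjacent to $e_c$ on both sides, and the safeguard is again the localization of Lemma~\ref{box2}: the forbidden $K_4$s cluster around the far endpoint of the adjacent edge rather than around $e_c$ itself. If one prefers to avoid this bookkeeping altogether, the alternative — exactly as Lemma~\ref{oneuse} does via Fig.~\ref{dis45} — is to exhibit the required $K_4$s explicitly in Fig.~\ref{oct} for one representative of each symmetry class.
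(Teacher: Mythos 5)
Your overall strategy is sound and is essentially the one the paper uses, made more systematic: you correctly extract from Lemma~\ref{box2} the key fact that for a non-slanting edge $e_c$ and \emph{any} adjacent edge carrying $c\pm1$ there are at least two forbidden-$c$ $K_4$s (and you correctly observe that the $45^{\circ}$ exception only hurts the slanting edge's colour, never the non-slanting one), and you correctly note that these $K_4$s cluster beyond the far endpoint of the adjacent edge, which is exactly why the $K_3$ configuration yields only two while the non-$K_3$ configurations yield three. The paper's own proof of the non-$K_3$ half is little more than the assertion that the absence of a common third vertex forces a third $K_4$, so your treatment is in fact more detailed than the original.

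There is, however, one concrete error: your claim that in the \emph{path} configuration the two far endpoints ``point away in different directions'', so that $A\cap B=\emptyset$ and $|A\cup B|\ge4$. This fails when the two adjacent edges attach to the two different endpoints of $e_c$ but bend toward the same side. Take $e_c=(v_1,v_2)$ with $f'(v_4,v_1)=c-1$ and $f'(v_2,v_3)=c+1$ in Fig.~\ref{oct}: this is a path (the three edges do not form a $K_3$ and $e_{c-1},e_{c+1}$ share no vertex), yet the far endpoints $v_4$ and $v_3$ are adjacent, and the $K_4$ with vertex set $\{v_3,v_4,u_2,u_3\}$ lies in both $A$ and $B$; here $|A\cup B|=3$, not $4$. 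The required bound $|A\cup B|\ge3$ still holds in this sub-case, but it is not obtained by disjointness, so the path configuration cannot be dismissed as ``routine'' and must be folded into the same finite, figure-based verification you already propose for the star configuration. With that amendment (or, as you suggest as a fallback, by exhibiting the $K_4$s explicitly for each symmetry class, which is what the paper does for the $K_3$ half), the argument goes through.
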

\begin{proof}
\begin{itemize}
\item Let us first consider the case where $c-1$, $c$ and $c+1$ are used at three edges which are forming a $K_3$ with $c$ being used at the non slanting edge. Consider the $K_3$ with vertex set $\{ v_1, v_2, v_3\}$ in $G_{S}$ as shown in Fig.~\ref{oct}. Suppose the colors $c-1$, $c$ and $c+1$ are used at  $(v_1,v_2)$, $(v_2,v_3)$ and $(v_1,v_3)$ respectively. In this case, there exists  two $K_4$s with vertex sets  $S_1=\{v_1,u_9,u_{10},u_{11}\}$ and $S_2=\{v_1,v_4,u_{11},u_{12} \}$ such that $c$ can not be used at $G_{S_1}$ and $G_{S_2}$. 

\item Now we consider the case where $c-1$, $c$ and $c+1$ are used at three edges which are not forming a $K_3$ with $c$ being used at the non slanting edge. In the previous case, as the two edges with colors $c-1$ and $c+1$ have a common vertex other than the end vertices of the edge with color $c$, we get only two different $K_4$s. Note that if three edges do not form a $K_3$, there does not exists a common vertex of the two edges with colors $c-1$ and $c+1$, other than an end vertex of the edge with color $c$. In that case, there exists at least $3$ different $K_4$s having vertex sets $S_1$, $S_2$ and $S_3$ such that $c$ can not be used at $G_{S_1}$, $G_{S_2}$ and $G_{S_3}$. 
\end{itemize}
\end{proof} \qed

We now consider how many $K_3$S can be there in $G_S$ such that for each such $K_3$ three consecutive colors can be used. For this, we have the following Lemma.
\begin{observation}\label{eight}
Consider the $K_4$ having vertex set $S=\{v_1,v_2,v_3,v_4\}$ and the subgraph $G_S$. There can be at most $8$ $K_3$s in $G_S$ such that in each of the $K_3$, three consecutive colors can be used. 
\end{observation}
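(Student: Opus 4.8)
The plan is to enumerate, directly from the grid structure, every $K_3$ contained in $G_S$, and then to bound how many of them can carry three consecutive colours at the same time. It is convenient to regard $T_8$ as the $8$-regular king grid on $\mathbb{Z}^2$ (two cells adjacent iff they agree in one coordinate and differ by $1$ in the other, or differ by $1$ in both), with $S$ the central $2\times 2$ block, so that $N(S)\setminus S$ is the ring of $12$ cells around it, exactly as drawn in Fig.~\ref{oct_box}. A $K_3$ of $T_8$ is then an ``L''-triple of pairwise adjacent cells lying in a unit $2\times 2$ block, and, as already noted, it is made up of one horizontal, one vertical and one slanting edge. The first step is to observe that such a $K_3$ lies in $G_S$ precisely when at least two of its three cells belong to $S$: if only one (or none) of them were in $S$, the edge joining its two cells outside $S$ would be incident to no vertex of $S$ and hence would not be an edge of $G_S$.

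Consequently every $K_3$ of $G_S$ is of one of two kinds: \emph{(i)} a face of the $K_4$ on $S$ --- there are exactly $4$ of these; or \emph{(ii)} a triangle with exactly two cells in $S$, whose two $S$-cells then form an edge of the $K_4$. For kind \emph{(ii)} I would go through the six edges of the $K_4$ one at a time: the two \emph{slanting} edges ($v_1v_3$ and $v_2v_4$ in Fig.~\ref{oct}) have both of their common neighbours inside $S$ and therefore carry no triangle of kind \emph{(ii)}, while each of the four \emph{non-slanting} edges has exactly two common neighbours outside $S$. This yields $4\cdot 2=8$ triangles of kind \emph{(ii)}, so $G_S$ contains $12$ triangles in all, and it remains to show that in any $L(1,2)$-edge labelling at most $8$ of them simultaneously carry a triple of consecutive colours. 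Two facts drive this. First, the four faces pairwise share a $K_4$-edge and any two ``opposite'' non-slanting edges of the $K_4$ are at distance $2$ (so their colours differ by at least $2$); tracing which colour each of the six $K_4$-edges gets, one finds that at most \emph{two} of the four faces can carry a consecutive triple at once. Second, the eight triangles of kind \emph{(ii)} can all carry consecutive triples --- this is verified by the same local distance-$2$ bookkeeping used in Lemma~\ref{box2} --- but a consecutive triple on the triangle of kind \emph{(ii)} hanging off a $K_4$-edge $e$ pins $f'(e)$ down to an end of that triple, and this conflicts with the colour constraints coming from the faces through $e$ and from the edges opposite $e$. Propagating this trade-off, the number of faces plus the number of kind-\emph{(ii)} triangles that can simultaneously host consecutive triples never exceeds $8$.

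The delicate point, and the one I expect to take real work, is exactly this simultaneity bound: the twelve triangles overlap heavily on the six edges of the $K_4$, so ruling out nine or more of them at once means carefully keeping track of which colour each $K_4$-edge carries together with all the distance-$1$ and distance-$2$ constraints among the $K_4$-edges and their incident ``hanging'' edges. I would carry it out as a finite case analysis organised by which $K_4$-edges play the role of the middle colour of a triple versus an end colour, supported by a figure in the style of Lemmas~\ref{box2}, \ref{oneuse} and \ref{twouse}; the enumeration in the first two paragraphs is routine, and all the difficulty is concentrated here.
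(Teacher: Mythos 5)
Your enumeration of the triangles of $G_S$ is correct and agrees with the paper's: there are $12$ of them, namely the $4$ faces of the $K_4$ on $S$ and the $8$ triangles hanging off the four non-slanting (cycle) edges of that $K_4$; the two diagonals $(v_1,v_3)$ and $(v_2,v_4)$ support no further triangles since all their common neighbours lie in $S$. The gap is that you never actually prove the bound of $8$: the entire burden is deferred to a ``finite case analysis organised by which $K_4$-edges play the role of the middle colour,'' which you describe only in outline and yourself flag as the part that will ``take real work.'' As written, the proposal establishes the inventory of triangles but not the statement. The intermediate claims you propose to route the argument through (that at most two of the four faces can simultaneously carry a consecutive triple, and a ``propagating trade-off'' between faces and hanging triangles) are neither verified nor needed, so even the plan is on shaky ground.

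The missing idea is a short double count that makes the case analysis unnecessary, and it is what the paper does. Every triangle of $G_S$ contains at least one of the four cycle edges $e_1=(v_1,v_2)$, $e_2=(v_2,v_3)$, $e_3=(v_3,v_4)$, $e_4=(v_4,v_1)$, and each $e_i$ lies in exactly $4$ of the $12$ triangles. Since any two edges of $G_S$ are within distance $2$ of each other, all $26$ colours appearing in $G_S$ are distinct; hence if $e_i$ carries colour $c$, each of the colours $c-1$ and $c+1$ appears on at most one edge of $G_S$, and that edge forms at most one triangle together with $e_i$. A triangle through $e_i$ whose three colours are consecutive must use $c-1$ or $c+1$ on one of its other two edges, so at most $2$ of the $4$ triangles through $e_i$ can qualify. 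Summing over $e_1,\dots,e_4$ gives at most $4\cdot 2=8$ qualifying triangles (if anything an overcount, since each face contains two cycle edges), which is exactly the Observation. You should replace your third paragraph with this argument.
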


\begin{proof}
Consider the edges $e_1=(v_1,v_2)$, $e_2=(v_2,v_3)$, $e_3=(v_3,v_4)$ and $e_4=(v_4,v_1)$ of the $K_4$ having vertex set $S=\{v_1,v_2,v_3,v_4\}$ (Fig. \ref{oct_box}). Observe that there are total $12$ $K_3$s in $G_S$ and each $K_3$ has at least one edge in $\{e_1,e_2,e_3,e_4\}$. Each edge $e \in \{e_1,e_2,e_3,e_4\}$ is a common edge of $4$ different $K_3$s. Let a color $c$ be used in an edge $e\in \{e_1,e_2,e_3,e_4\}$. In order to form a $K_3$ with three consecutive colors with $e$, either $c+1$ or $c-1$ must be used in that $K_3$.  So, out of the $4$ $K_3$s that include $e$, at most two of then can have $3$ consecutive colors. As there are $4$ edges in  $\{e_1,e_2,e_3,e_4\}$, we can have at most $8$ $K_3$s where each of them has $3$ consecutive colors.
\end{proof} \qed

Now we state and prove the following Theorem. 
\begin{theorem}\label{th3}
$\lambda'_{1,2}(T_8)\geq 28$.
\end{theorem}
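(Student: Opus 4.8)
The strategy is to push the pigeon-hole argument of Theorem~\ref{th2} one notch further. Suppose for contradiction that $\lambda'_{1,2}(T_8)\le 27$. Fix a $K_4$ with vertex set $S$ and its $G_S$, which contains $26$ edges, all of which must receive distinct colors from $\{0,1,\dots,27\}$. So $G_S$ uses $26$ of the $28$ available colors; equivalently, exactly two colors from $\{0,\dots,27\}$ are \emph{missing} from $G_S$. I will case-split on the positions of these two missing colors. In every case I want to exhibit, via Lemmas~\ref{box2}, \ref{oneuse} and \ref{twouse}, \emph{at least $25$} distinct $K_4$s other than $S$ (so $26$ including $S$) at each of which some color used in $G_S$ is forbidden; since $G$ contains only $25$ $K_4$s total, that is $25$ "pigeons" forced into $24$ "holes" (the $K_4$s other than $S$), so some $G_{S_j}$ has two forbidden colors and hence needs a $29$th color — contradiction. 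Actually, since Theorem~\ref{th2} already squeezes out $27$, the real content here is to find \emph{one more} forbidden-color incidence than before, which is exactly what Lemmas~\ref{oneuse}, \ref{twouse} and Observation~\ref{eight} are designed to supply.

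First I would dispose of the "easy" missing-color patterns. If the $26$ colors used in $G_S$ contain $13$ disjoint consecutive pairs $(c,c+1)$ — which happens whenever the two missing colors are such that the complement decomposes into consecutive pairs, e.g. both missing colors at the ends, or suitably placed — then Lemma~\ref{box2} gives, per pair, two $K_4$s (other than $S$) forbidding a color, i.e. $26$ pigeons into $24$ holes, done. The delicate situation is when the complement of the two missing colors cannot be fully paired: this forces at least one or two "leftover" colors that sit in a consecutive \emph{triple} $(c-1,c,c+1)$ all used in $G_S$. This is where Lemmas~\ref{oneuse}/\ref{twouse} enter: a triple with the middle color on a non-slanting edge yields \emph{two} forbidden-$K_4$s (more if it's not a $K_3$), and even a triple centered on a slanting edge yields one (more if not a $K_3$), so a leftover triple pays at least as well as — and often better than — a pair. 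Combining the pair count from the paired part with the triple bonus from the leftover part, I would check in each residue/parity case that the total reaches $25$.

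Concretely, the case analysis runs on how many colors in $\{0,\dots,27\}\setminus\{\text{two missing}\}$ fail to be covered by a maximum disjoint-pair packing. The two missing colors split the remaining $26$ into at most three runs; each run of even length pairs up perfectly, each run of odd length leaves one leftover. So there are $0$ or $2$ leftovers (an even total, since $26$ is even minus an even number of odd-length runs — I'd track parity carefully). With $0$ leftovers we have $13$ pairs $\Rightarrow 26\ge 25$, done. With $2$ leftovers we have $12$ pairs (contributing $\ge 24$ forbidden-$K_4$s) plus two leftover colors $\ell_1,\ell_2$; each $\ell_i$, being the center of an all-used consecutive triple in $G_S$, contributes via Lemma~\ref{oneuse} or \ref{twouse} at least one further forbidden-$K_4$, giving $\ge 24+2 = 26 \ge 25$. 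The one genuinely tricky sub-case is when a leftover triple is centered on a slanting edge \emph{and} forms a $K_3$ \emph{and} — worst of all — its single forbidden-$K_4$ coincides with one already counted from the $12$ pairs or from the other leftover. This is where Observation~\ref{eight} does the bookkeeping: it caps at $8$ the number of $K_3$s in $G_S$ admitting three consecutive colors, which bounds how much "overlap" the adversary can engineer and lets me argue the distinctness of the claimed $\ge 25$ sets.

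\textbf{The main obstacle.} The hard part is not the counting arithmetic but the \emph{distinctness} argument: Lemma~\ref{box2} and its refinements each produce $K_4$s by name relative to one fixed picture (Fig.~\ref{oct}), and a priori the $K_4$s forbidden by different pairs/triples could collide. I expect the bulk of the real work to be a careful, geometry-driven verification — using the explicit coordinates of the $25$ $K_4$s in $G$ and the symmetry reductions already invoked — that across the $13$ disjoint pairs (or $12$ pairs plus $2$ leftover triples) the named $K_4$s are all distinct, or that any unavoidable collision still leaves the headcount at $\ge 25$. Observation~\ref{eight} is the key lever for bounding collisions among triple-induced $K_4$s, and I would lean on it together with the fact that a pair $(c,c+1)$ on edges at $45^\circ$ (the "cheap" case of Lemma~\ref{box2}, giving only $3$ instead of $4$) is precisely the configuration that Lemma~\ref{oneuse} upgrades. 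Once distinctness is nailed down, the pigeon-hole step is immediate and yields $\lambda'_{1,2}(T_8)\ge 28$, which with the upper bound of~\cite{cala} gives $\lambda'_{1,2}(T_8)=28$.
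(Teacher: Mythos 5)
There is a genuine gap, and it is quantitative: your pigeon-hole threshold is wrong for this theorem. With the palette enlarged to $\{0,1,\dots,27\}$ (28 colors), a $K_4$ whose $G_{S_j}$ has only \emph{two} forbidden colors still has $28-2=26$ colors available for its $26$ mutually-conflicting edges, which is exactly enough; no contradiction follows. To force $\lambda'_{1,2}(T_8)\geq 28$ you must produce some $G_{S_j}$ at which at least \emph{three} colors used in $G_S$ are forbidden, and by pigeon-hole over the $24$ $K_4$s other than $S$ that requires at least $2\cdot 24+1=49$ distinct (color, forbidden-$K_4$) incidences. Your plan tops out at about $26$ incidences ($12$ or $13$ disjoint pairs plus one or two leftover triples), which is essentially a rerun of Theorem~\ref{th2} and can only deliver $\lambda'_{1,2}(T_8)\geq 27$.

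The paper closes this factor-of-two deficit by abandoning the disjoint-pair decomposition entirely and instead charging \emph{every one of the 26 edges of $G_S$ individually}: the $6$ colors adjacent to the two missing colors and to the boundary ($0$, $c_1\pm 1$, $c_2\pm 1$, $27$) each contribute at least $1$ incidence via Lemma~\ref{box2}; every other slanting edge contributes $1$ or $2$ via Lemma~\ref{oneuse} according to whether its consecutive triple sits in a $K_3$; every non-slanting edge contributes $2$ or $3$ via Lemma~\ref{twouse}; and Observation~\ref{eight} caps the number of "cheap" $K_3$-triples at $8$, so the worst-case total is $6+x+2(8-x)+2(8-x)+3(4+x)=50\geq 49$. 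Note also that your "main obstacle" — distinctness of the named $K_4$s across different colors — is not actually an obstacle in this scheme: the pigeons are (color, $K_4$) incidences, so coincidences of $K_4$s across \emph{different} colors are precisely what you want, and the only distinctness needed is among the $K_4$s attached to a single fixed color, which the lemmas already guarantee.
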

\begin{proof}
Consider the graph $G$, the $K_4$ with vertex set $S=\{v_1,v_2,v_3,v_4\}$ and the subgraph $G_S$ as shown in Fig. \ref{oct}. Note that $26$ distinct colors from $\{0, 1, \cdots, 27\}$ must be used in $G_{S}$. In other words, there must be $2$ colors in $\{0, 1, \cdots, 27\}$ which should remain unused. Let these two colors be $c_1$ and $c_2$ where $c_1 < c_2$. Let us now consider the $6$ colors $0$, $c_1-1$, $c_1+1$, $c_2-1$, $c_2+1$ and $27$. 

We first consider the case when all these $6$ colors are distinct and denote $X=\{0, c_1-1, c_1+1, c_2-1, c_2+1 27\}$. In this case, for each color $c \in X$, only one of $c \pm 1$ is used and the other is not used in $G_S$. From Lemma \ref{box2}, for each $c \in X$,  there exists at least one $K_4$ having vertex set $S_1$  other than $S$ in $G$ such that $c$ can not be used at $G_{S_1}$. Moreover, $c$ must be used at a slanting edge in $G_S$ in this case. Considering all $6$ colors in $X$ are used in $6$ slanting edges, we get at least $6$ such $K_4$s. For each $c$ among the remaining $26-6=20$ colors, both $c\pm 1$ are used in $G_S$. There are total $26$ edges in $G_S$ among which $14$ are slanting edges and $12$ are non-slanting edges. Therefore, we are yet to consider the colors used in the remaining $14-6=8$ slanting edges. Note that for each such color $c$, both $c\pm 1$ are used in $G_S$. Assume these $8$ colors, $x$ many colors $c'_1, c'_2, \cdots, c'_x$ are there such that for each $c'_i$, three consecutive colors $c'_i-1$, $c'_i$, $c'_i + 1$ can be used in a $K_3$. From Lemma \ref{oneuse}, for each such color $c'_i$, there exists at least one $K_4$ having vertex set $S_1$  other than $S$ in $G$ such that $c'_i$ can not be used at $G_{S_1}$. Considering all these $x$ colors, we get at least $x$ such $K_4$s in $G$. Now consider the remaining  $8-x$ colors $c'_{x+1}, c'_{x+2}, \cdots, c'_8$ used in slanting edges. Note that for each such $c'_i$, three consecutive colors $c'_i-1$, $c'_i$ and $c'_i+1$ can not be used in a $K_3$. From Lemma \ref{oneuse}, for each such $c'_i$,  there exists at least $2$ different $K_4$s having vertex sets $S_1$ and  $S_2$  other than $S$ in $G$ such that $c'_i$ can not be used at $G_{S_1}$ and $G_{S_2}$. Considering all these $8-x$ colors, we get at least $2(8-x)$ $K_4$s in $G$. 

Now consider the $12$ non slanting edges in $G_S$.  Assume that among them, $y$ many colors $c''_1, c''_2, \cdots, c''_y$ are there such that for each  $c''_i$, three consecutive colors $c''_i-1$, $c''_i$, $c''_i + 1$ can be used in a $K_3$. Clearly all those $y$ many $K_3$s must be different from those $x$ many $K_3$ considered for slanting edges. From Observation \ref{eight}, there exists at most $8$ $K_3$s in $G_S$ such that for each of them, three consecutive colors can be used. As $x$ many $K_3$s have already been considered for slanting edges, $y$ can be at most $8-x$. From Lemma\ref{twouse}, for each such $c''_i$,  there exists at least $2$ different $K_4$s having vertex sets $S_1$ and  $S_2$  other than $S$  such that $c''_i$ can not be used at $G_{S_1}$ and  $G_{S_2}$. Considering all these $8-x$ colors, we get at least $2(8-x)$  $K_4$s. We are yet to consider the remaining $z=12-(8-x)=4+x$ non slanting edges. For each such color $c''$, the colors $c''-1$, $c''$ and $c''+1$ can not be used in a $K_3$ in $G_S$. So from Lemma\ref{twouse}, for each such $c''$,  there exists at least $3$ different $K_4$s having vertex sets $S_1$, $S_2$ and $S_3$ other than $S$ in $G$ such that $c''$ can not be used at $G_{S_1}$, $G_{S_2}$ and  $G_{S_3}$. Considering all these $4+x$ colors, we get at least $3(4+x)$ $K_4$s. In total we get at least $6+x+2(8-x)+2(8-x)+3(4+x)=50$ $K_4$s in $G$. But there are only $24$ distinct $K_4$s in $G$ other than $S$. From pigeon hole principle ($50$ pigeons and $24$ holes), there exists at least one $S_i$ in $G$ such that at least $3$ colors which are used in $G_S$ can not be used in $G_{S_i}$ and hence $\lambda'_{1,2}(G_{S_i})\geq 28$ implying $\lambda'_{1,2}(T_8)\geq 28$. 

If the colors $0$, $c_1-1$, $c_1+1$, $c_2-1$, $c_2+1$ and $27$ are not distinct, proceeding similarly, we can show that there are a need of more that $50$ $K_4$s in $G$ and hence from pigeon hole principle (more than $50$ pigeons and $24$ holes) $\lambda'_{1,2}(T_8)\geq 28$.

\end{proof}\qed

\section{Conclusion}
It was proved in~\cite{cala} that $25 \leq \lambda'_{1,2}(T_8)\leq 28$ with a gap between the lower and upper bounds. In this paper, we filled the gap and proved that $\lambda'_{1,2}(T_8)\geq 28$. This essentially implies $\lambda'_{1,2}(T_8)= 28$.

\bibliographystyle{splncs04}
\bibliography{mybibfile}

\end{document}